\documentclass[a4paper,12pt]{amsart}
\usepackage{latexsym,amssymb}

\DeclareMathOperator{\Aut}{Aut}

\newtheorem{theorem}{Theorem}

\newtheorem{lemma}[theorem]{Lemma}
\newtheorem{corollary}[theorem]{Corollary}

\theoremstyle{definition}

\newtheorem*{remark}{Remark}

\DeclareMathOperator{\PgL}{P\Gamma L}
\newcommand{\abs}[1]{\lvert#1\rvert}

\newcommand{\ZZZ}{\mathbb Z}

\newcommand{\FFF}{\mathbb F}

\title[On the non--existence of sharply transitive sets]{On the non--existence
  of sharply transitive sets of permutations in certain finite permutation
  groups}

\author{Peter M\"uller} 
\email{peter.mueller@mathematik.uni-wuerzburg.de}
\author{G\'abor P.\ Nagy} 
\email{nagyg@math.u-szeged.hu}

\address{Institut f\"ur Mathematik, Universit\"at W\"urzburg, Am
Hubland, D-97074 W\"urz\-burg, Germany}
\address{Bolyai Institute, University of Szeged, Aradi v\'ertan\'uk
tere 1, H-6720 Sze\-ged, Hungary}

\thanks{The second author was supported by DAAD and TAMOP project
4.2.2-08/1/2008-0008.}

\begin{document}

\begin{abstract}
In this short note we present a simple combinatorial trick which can be
effectively applied to show the non-existence of sharply transitive sets of
permutations in certain finite permutation groups.
\end{abstract}

\maketitle 

\section{Introduction}

A permutation code (or array) of length $n$ and distance $d$ is a set $S$ of
permutations of some fixed set $\Omega$ of $n$ symbols such that the Hamming
distance between each distinct $x,y \in S$ is at least $d$, see
\cite{FrankDeza}. By elementary counting, one has $|S|\leq n(n-1)\cdots d$ and
equality holds if and only if $S$ for any two tuples $(x_1,\ldots,x_{n-d+1})$,
$(y_1,\ldots,y_{n-d+1})$ of distinct symbols, there is a unique element $s\in
S$ with $x_1^s=y_1,\ldots,x_{n-d+1}^s=y_{n-d+1}$. Such sets of permutations
are called \emph{sharply $t$-transitive}, where $t=n-d+1$. It is well known
that sharply $1$- and $2$-transitive sets of permutations correspond to Latin
squares and affine planes, respectively \cite{Dembowski}.

In general, there are very few results on permutation codes and there is a
large gap between the lower and upper estimates for $|S|$; see \cite{Tarnanen},
\cite{Quistorff}. Most of the known constructions are related to multiply
transitive permutation groups. In the 1970's, P. Lorimer started the systematic
investigation of the question of existence of sharply $2$-transitive sets
in finite $2$-transitive permutation groups. This program was continued by Th.
Grundh\"ofer, M. E. O'Nan, P. M\"uller, see \cite{GrundhoeferMueller} and the
references therein. Some of the $2$-transitive permutation groups needed rather
elaborated methods from character theory in order to show that they do not
contain sharply $2$-transitive sets of permutations. 

In this paper, we present some simple combinatorial methods which are useful to
exclude the existence of sharply $1$- and $2$-transitive sets of permutations
in given finite permutation groups. 

Notice that if $S$ is a sharply $t$-transitive set of permutations on $\Omega$,
then it is also a sharply $1$-transitive set of permutations on the set
$\Omega^{(t)}$ of $t$-arrangements of $\Omega$. In other words, the
$t$-transitive permutation group $G$ contains a sharply $t$-transitive set if
and only if in its induced action on $\Omega^{(t)}$, $G$ contains a sharply
$1$-transitive set. 

Let $G$ be a permutation group on the set $\Omega=\{\omega_1,\ldots,\omega_n\}$
and for $g\in G$, denote by $\pi(g)$ the corresponding permutation matrix. Let
$J$ denote the $n\times n$ all-one matrix. The existence of sharply
transitive sets in $G$ is equivalent to the $\{0,1\}$-solvability of the matrix
equation
\begin{equation}\label{eq:01}
\sum_{g\in G} x_g \pi(g) = J.
\end{equation}
For some permutation groups we are able to show that \eqref{eq:01} has no
integer solution, which implies the nonexistence of a sharply transitive set in
the given group. 


\section{Contradicting subsets} 

The following simple lemma will be our main tool. 
\begin{lemma} \label{lm:doublecount}
Let $S$ be a sharply transitive set of permutations on a finite set $\Omega$. 
Let $B$ and $C$ be arbitrary subsets of $\Omega$. Then $\sum_{g\in S}\abs{B\cap
C^g}=\abs{B}\abs{C}$.
\end{lemma}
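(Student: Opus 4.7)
The plan is to prove the identity by a standard double-counting argument applied to the set
\[
T = \{(b, c, g) \in B \times C \times S : c^g = b\}.
\]

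First I would count $|T|$ by fixing $g \in S$ and letting $(b,c)$ vary. Since $g$ is a bijection on $\Omega$, the condition $c^g = b$ with $c \in C$ forces $b \in C^g$; conversely every $b \in B \cap C^g$ arises from a unique $c \in C$ (namely $c = b^{g^{-1}}$). Hence for each fixed $g$ the number of contributing pairs is exactly $\abs{B \cap C^g}$, and summing over $g \in S$ gives
\[
\abs{T} = \sum_{g \in S} \abs{B \cap C^g}.
\]

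Next I would count $|T|$ by fixing $(b, c) \in B \times C$ and letting $g$ vary. Here the sharp transitivity of $S$ enters: by definition there is exactly one $g \in S$ sending $c$ to $b$. Therefore each of the $\abs{B}\abs{C}$ pairs $(b,c)$ contributes precisely one triple to $T$, which yields
\[
\abs{T} = \abs{B}\abs{C}.
\]

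Equating the two expressions for $\abs{T}$ proves the lemma. There is no real obstacle here: the argument is purely a double count, and the sole nontrivial ingredient is the defining property of a sharply transitive set, which supplies the "$1$" in the second count.
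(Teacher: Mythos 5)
Your proof is correct and is essentially identical to the paper's: both count the triples $(b,c,g)$ with $c^g=b$ in two ways, using sharp transitivity to get $\abs{B}\abs{C}$ on one side and $\sum_{g\in S}\abs{B\cap C^g}$ on the other. You merely spell out the bijection $c\mapsto b=c^g$ in more detail than the paper does.
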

\begin{proof} Count the set of triples $(b,c,g)$, where $b\in B$, $c\in C$,  
$g\in S$ and $c^g=b$, in two ways: If $b,c$ is given, then there is a unique
$g$ by sharp transitivity. If $g$ is given, then the number of pairs $b,c$ is
$\abs{B\cap C^g}$.
\end{proof}

An immediate consequence is
\begin{lemma} \label{lm:noshrtrset}
Let $G$ be a permutation group on a finite set $\Omega$. Assume that there are
subsets $B$, $C$ of $\Omega$ and a prime $p$ such that $p\nmid\abs{B}\abs{C}$
and $p\mid\abs{B\cap C^g}$ for all $g\in G$. Then $G$ contains no sharply
transitive set of permutations.
\end{lemma}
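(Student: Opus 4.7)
The plan is a one-step argument by contradiction, feeding the hypothesis directly into Lemma~\ref{lm:doublecount}. Suppose, for contradiction, that $S \subseteq G$ is a sharply transitive set of permutations on $\Omega$. The divisibility assumption $p \mid \abs{B \cap C^g}$ is stated for every $g \in G$, so in particular it holds for every $g \in S$.

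Applying Lemma~\ref{lm:doublecount} to the sharply transitive set $S$ with the given subsets $B$ and $C$ yields
\[
\sum_{g \in S} \abs{B \cap C^g} = \abs{B}\,\abs{C}.
\]
Every summand on the left is divisible by $p$, hence so is the left-hand side; but the right-hand side is coprime to $p$ by hypothesis. This contradiction rules out the existence of such an $S$.

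There is essentially no obstacle in the argument itself: as the authors indicate by calling this an \emph{immediate consequence}, the proof reduces to the observation that a universal divisibility property over $G$ is inherited by any subset, and then combined with the counting identity. The genuine difficulty lies outside the statement, namely in constructing suitable witnesses $B$, $C$, and $p$ for specific permutation groups of interest; that is the content of the applications to come.
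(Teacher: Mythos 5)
Your proof is correct and is exactly the argument the paper intends: the authors give no explicit proof, labelling the lemma an immediate consequence of Lemma~\ref{lm:doublecount}, and your contradiction via the divisibility of each summand $\abs{B\cap C^g}$ versus the coprimality of $\abs{B}\abs{C}$ is precisely that intended deduction. Nothing is missing.
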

\begin{remark}
It is easy to see that under the assumption of Lemma \ref{lm:noshrtrset}, the
system \eqref{eq:01} does not have a solution in the finite field $\FFF_p$, so
in particular \eqref{eq:01} has no integral solution.
\end{remark}

We give several applications of these lemmas. First, we show that in even
characteristic, the symplectic group does not contain sharply transitive sets of
permutations.

\begin{theorem} \label{th:sp2n2}
Let $n,m$ be positive integers, $n\geq 2$, $q=2^m$. Let $G_1=PSp(2n,q)\rtimes
\Aut(\mathbb F_q)$ and $G_2=Sp(2n,q) \rtimes \Aut(\mathbb F_q)$ be permutation
groups in their natural permutation actions on $\Omega_1=PG(2n-1,q)$ and
$\Omega_2=\mathbb F_q^{2n}\setminus \{0\}$. Then, $G_1$ and $G_2$ do not
contain a sharply transitive set of permutations.
\end{theorem}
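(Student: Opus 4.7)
The plan is to apply Lemma~\ref{lm:noshrtrset} with the prime $p=2$. The central observation will be that any nondegenerate two-dimensional quadratic form over a field of characteristic two has an even number of nonzero singular vectors.

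Choose the symplectic form $\sigma$ on $V=\mathbb F_q^{2n}$ with Gram matrix over $\mathbb F_2$, so that $\Aut(\mathbb F_q)$ normalizes $Sp(2n,q)$. Fix a nondegenerate elliptic quadratic form $Q$ polarizing to $\sigma$ and a hyperbolic plane $\pi\subseteq V$, i.e., a two-dimensional subspace on which $\sigma$ restricts nondegenerately. Set
\[
B=\pi\setminus\{0\},\qquad C=\{v\in V\setminus\{0\}:Q(v)=0\},
\]
with the analogous projective versions for the action of $G_1$ on $\Omega_1$. Then $\abs{B}$ equals $q^2-1$ or $q+1$, and $\abs{C}$ equals $(q^n+1)(q^{n-1}-1)$ or $(q^n+1)(q^{n-1}-1)/(q-1)$, in the affine and projective settings respectively. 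Since $q$ is even and $n\ge 2$, each of these numbers is a product of odd integers, so $\abs{B}\,\abs{C}$ is odd.

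For any $g$ in $G_1$ or $G_2$, the set $C^g$ is the singular set of another elliptic quadratic form $Q'$ polarizing to $\sigma$: symplectic transformations preserve both $\sigma$ and the type of a quadratic form, and the Frobenius permutes the quadratic forms polarizing to $\sigma$ while preserving their Arf invariant. Consequently $\abs{B\cap C^g}$ equals the number of nonzero singular vectors, or singular projective points, of the restriction $Q'|_\pi$. Because $\sigma|_\pi$ is nondegenerate, $Q'|_\pi$ is itself a nondegenerate two-dimensional quadratic form, hence either hyperbolic (with $2(q-1)$ nonzero singular vectors, i.e.\ $2$ singular projective points) or elliptic (with none). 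In either case $\abs{B\cap C^g}$ is even, and Lemma~\ref{lm:noshrtrset} yields the claimed nonexistence.

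The essentially only step requiring argument is the two-dimensional count above, which is immediate from the fact that a nondegenerate quadratic form in two variables over $\mathbb F_q$ (char two) has either $0$ or $2$ projective zeros. The behavior under the field automorphism comes for free from the Galois-invariance of the Arf invariant; there is no hard calculation.
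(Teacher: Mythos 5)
Your proof is correct and follows essentially the same route as the paper: both arguments pair an elliptic quadric polarizing to the invariant symplectic form with a nondegenerate (nonsingular) line, observe that both sets have odd cardinality while every $G$-translate meets the other set in $0$ or $2$ projective points (equivalently $0$ or $2(q-1)$ vectors), and invoke Lemma~\ref{lm:noshrtrset} with $p=2$. The only cosmetic differences are that you swap the roles of $B$ and $C$ (moving the quadric instead of the line, which is equivalent since $\abs{B\cap C^g}=\abs{B^{g^{-1}}\cap C}$) and you spell out the binary-form computation that the paper summarizes as ``a nonsingular line is not tangent to $\mathcal E$.''
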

\begin{proof}
We deal first with the projective group $G_1$. Let $\mathcal E$ be an elliptic
quadric whose quadratic equation polarizes to the invariant symplectic form
$\langle .,. \rangle$ of $G_1$. Let $\ell$ be a line of $PG(2n-1,q)$ which is
nonsingular with respect to $\langle .,. \rangle$. Then for any $g\in G_1$,
$\ell^g$ is nonsingular, that is, it is not tangent to $\mathcal E$. In
particular, $|\mathcal E \cap \ell^g|=0$ or $2$ for all $g\in
G_1$. Furthermore, we have
\[|\mathcal E|=\frac{q^{2n-1}-1}{q-1} - q^{n-1}, \hspace{1cm} |\ell|=q+1, \]
both odd for $n\geq 2$. We apply Lemma \ref{lm:noshrtrset} with $B=\mathcal E$,
$C=\ell$ and $p=2$ to obtain the result of the theorem. 

In order to show the result for the group $G_2$, we define the subsets $\mathcal
E'=\varphi^{-1}(\mathcal E)$ and $\ell'=\varphi^{-1}(\ell)$, where
$\varphi:\Omega_2\to \Omega_1$ is the natural surjective map. Then, 
\[|\mathcal E'|=(q-1)|\mathcal E|, |\ell'|=(q-1)|\ell| \mbox{ and } |\mathcal
E' \cap \ell'| \in \{0,2(q-1)\}.\]
Hence, Lemma \ref{lm:noshrtrset} can be applied with $B=\mathcal E'$, $C=\ell'$
and $p=2$.
\end{proof}

It was a long standing open problem wether the Mathieu group $M_{22}$ contains
a sharply transitive set of permutations, cf.\ \cite{Grundhoefer}. The
negative answer given in the following theorem implies the nonexistence of
sharply $2$-transitive sets in the Mathieu group $M_{23}$.

We will use the Witt design $\mathcal W_{23}$. This is a $(23,7,4)$--Steiner
system. The fact which we use here and again in the proof of Theorem
\ref{T:McL} is that any two blocks of $\mathcal W_{23}$ intersect in $1$, $3$,
or $7$ points.
\begin{theorem} \label{th:m22}
In its natural permutation representation of degree $22$, the Mathieu group
$M_{22}$ does not contain a sharply transitive set of permutations.
\end{theorem}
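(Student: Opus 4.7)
My plan is to apply Lemma \ref{lm:noshrtrset} with prime $p=2$. The first step is to identify $\Omega$ with the point set of $\mathcal{W}_{23}$ minus a single point $\infty$, where $\infty$ is the point whose $M_{23}$-stabilizer is $M_{22}$. Under this identification, $M_{22}$ permutes the blocks of $\mathcal{W}_{23}$ that do not contain $\infty$, each of which is a $7$-subset of $\Omega$.

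Next I would choose $B\subset\Omega$ to be such a block and set $C:=\Omega\setminus B$, which has size $15$; then $\abs{B}\abs{C}=7\cdot 15=105$ is odd, so $p\nmid\abs{B}\abs{C}$. For any $g\in M_{22}$ the image $B^g$ is again a block of $\mathcal{W}_{23}$ avoiding $\infty$, and by the cited intersection fact $\abs{B\cap B^g}\in\{1,3,7\}$. Consequently
\[
\abs{B\cap C^g}\;=\;\abs{B}-\abs{B\cap B^g}\;=\;7-\abs{B\cap B^g}\;\in\;\{0,4,6\}
\]
is always even, so the hypotheses of Lemma \ref{lm:noshrtrset} are met and the nonexistence of a sharply transitive set follows.

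A clean alternative would be to skip the complementation and apply Lemma \ref{lm:doublecount} directly with $B=C$ equal to a $7$-block not containing $\infty$: the resulting identity $\sum_{g\in S}\abs{B\cap B^g}=\abs{B}^2=49$ would express an odd integer as a sum of $\abs{S}=22$ odd numbers, which is impossible. There is no real obstacle in either version; the essential ingredient is the odd intersection pattern of $\mathcal{W}_{23}$ blocks, and the whole argument reduces to a parity count. The only real decision is packaging: the complementation trick is what lets the argument fit inside Lemma \ref{lm:noshrtrset} as stated, rather than requiring the ad hoc mod-$2$ count in the alternative.
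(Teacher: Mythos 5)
Your proof is correct and is essentially identical to the paper's: the authors also take $B$ a block of $\mathcal W_{23}$ avoiding the fixed point, $C=\Omega\setminus B$, and apply Lemma \ref{lm:noshrtrset} with $p=2$ using $\abs{B\cap C^g}\in\{0,4,6\}$. Your alternative parity count via Lemma \ref{lm:doublecount} (an odd total $49$ as a sum of $22$ odd terms) is also valid and is just a repackaging of the same idea.
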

\begin{proof}
Let $\Omega'=\{1,\ldots,23\}$, $\Omega=\{1,\ldots,22\}$ and $G=M_{22}$ be the
stabilizer of $23 \in \Omega'$. Let $B\subset \Omega$ be a block of the Witt
design $\mathcal W_{23}$, and $C=\Omega \setminus B$. Then, $|B|=7, |C|=15$
and for all $g\in G$, $|B\cap C^g| = 0,4 \mbox{ or } 6$. Lemma
\ref{lm:noshrtrset} implies the result with $p=2$.
\end{proof}

We can apply our method for certain alternating groups, as well. The following
simple result is somewhat surprising because until now, the symmetric and
alternating groups seemed to be out of scope in this problem.

\begin{theorem} \label{th:altn}
If $n\equiv 2,3 \pmod{4}$ then the alternating group $A_n$ does not contain a
sharply $2$-transitive set of permutations.
\end{theorem}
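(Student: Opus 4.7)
The plan is to apply Lemma~\ref{lm:noshrtrset} after invoking the reduction discussed in the introduction: $A_n$ contains a sharply $2$-transitive set of permutations on $\Omega=\{1,\ldots,n\}$ if and only if, in its induced action on the set $\Omega^{(2)}$ of ordered pairs of distinct elements, $A_n$ contains a sharply $1$-transitive set. Thus it suffices to exhibit subsets $B,C\subseteq \Omega^{(2)}$ and a prime $p$ fulfilling the hypotheses of the lemma for this action, and $p=2$ will turn out to be the right choice.

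Using the standard linear order on $\Omega$, I would choose the obvious candidates
\[
B=\{(i,j):i<j\},\qquad C=\{(j,i):i<j\},
\]
the ``increasing'' and ``decreasing'' ordered pairs, each of cardinality $\binom{n}{2}$. An elementary parity check shows that $\binom{n}{2}$ is odd precisely when $n\equiv 2,3\pmod{4}$, so in our range $2\nmid \abs{B}\abs{C}$, taking care of the first hypothesis of the lemma.

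The heart of the argument is the identification of $\abs{B\cap C^g}$ for $g\in A_n$. Unwinding the definitions, a pair $(a,b)\in\Omega^{(2)}$ lies in $B\cap C^g$ if and only if $a<b$ but $g^{-1}(a)>g^{-1}(b)$; in other words, $\abs{B\cap C^g}$ is precisely the number of inversions of the permutation $g^{-1}$. Since the parity of the inversion count of any permutation equals its sign, the assumption $g\in A_n$ (equivalently, $g^{-1}$ even) forces $\abs{B\cap C^g}$ to be even, which verifies the remaining hypothesis of Lemma~\ref{lm:noshrtrset}. No genuine obstacle arises here; the only small bookkeeping subtlety is the passage from $g$ to $g^{-1}$ in the inversion identification, but since sign is invariant under inversion, this causes no trouble, and the theorem follows.
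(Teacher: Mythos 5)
Your proof is correct and follows essentially the same route as the paper: the same reduction to a sharply transitive set on $\Omega^{(2)}$, the same sets $B$ and $C$ of increasing and decreasing pairs, the same parity observation that $\binom{n}{2}$ is odd for $n\equiv 2,3\pmod 4$, and the same identification of $\abs{B\cap C^g}$ with an inversion count whose parity is $\mathrm{sgn}(g)$. Your extra care about $g$ versus $g^{-1}$ is harmless since the sign is inversion-invariant, exactly as you note.
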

\begin{proof}
Assume $n\equiv 2,3 \pmod{4}$ and let $G$ be the permutation action of $A_n$ on
the set $\Omega^{(2)}$ with $\Omega=\{1,\ldots,n\}$. A sharply $2$-transitive
set of permutations in $A_n$ corresponds to a sharply transitive set of
permutations in $G$. Define the subsets
\[B=\{(x,y) \mid x<y\}, \hspace{1cm} C=\{(x,y) \mid x>y\}\]
of $\Omega^{(2)}$. By the assumption on $n$, $|B|=|C|=n(n-1)/2$ is odd. For any
permutation $g\in S_n$, we have
\[|\{(x,y) \mid x<y,\; x^g>y^g\}| \equiv \mathrm{sgn}(g) \pmod{2}.\]
This implies $|B\cap C^g|\equiv 0 \pmod{2}$ for all $g \in A_n$. Thus, we
can apply Lemma \ref{lm:noshrtrset} to obtain the nonexistence of sharply
transitive sets in $G$ and sharply $2$-transitive sets in $A_n$. 
\end{proof}

Theorems \ref{th:m22} and \ref{th:altn} can be used to prove the
nonexistence of sharply $2$-transitive sets in the Mathieu group $M_{23}$. 
\begin{corollary}
In its natural permutation representation of degree $23$, the Mathieu group
$M_{23}$ does not contain a sharply $2$-transitive set of permutations.
\end{corollary}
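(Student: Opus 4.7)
The plan is to assume that $M_{23}$, in its natural degree-$23$ action on $\Omega' = \{1, \dots, 23\}$, admits a sharply $2$-transitive set $S$ of permutations, and to derive a contradiction in two independent ways, either of which suffices for the corollary.

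\emph{First approach, via Theorem \ref{th:m22}.} I would fix a point $\alpha \in \Omega'$ and set $S_\alpha = \{s \in S : s(\alpha) = \alpha\}$. Since elements of $S_\alpha$ fix $\alpha$, they lie in the stabilizer of $\alpha$ in $M_{23}$, which is $M_{22}$ in its natural action on $\Omega := \Omega' \setminus \{\alpha\}$. The key step is to show that $S_\alpha$ is sharply $1$-transitive on $\Omega$: for any $x, y \in \Omega$ (allowing $x = y$), the sharp $2$-transitivity of $S$ applied to the admissible ordered pairs $(\alpha, x)$ and $(\alpha, y)$ yields a unique $s \in S$ with $s(\alpha) = \alpha$ and $s(x) = y$, and this $s$ is the unique element of $S_\alpha$ sending $x$ to $y$. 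This contradicts Theorem \ref{th:m22}.

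\emph{Second approach, via Theorem \ref{th:altn}.} Since $M_{23}$ is simple, it consists of even permutations and hence embeds in $A_{23}$, so $S \subseteq A_{23}$ would be a sharply $2$-transitive set of permutations in $A_{23}$. Because $23 \equiv 3 \pmod{4}$, Theorem \ref{th:altn} excludes this, giving the desired contradiction. Neither approach poses any real difficulty; the only point requiring a brief sanity check is the case $x = y$ in the first approach, which is legitimate because $\alpha \ne x$ makes $(\alpha, x)$ an admissible ordered pair of distinct elements.
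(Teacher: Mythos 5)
Your proposal is correct and matches the paper, which deliberately leaves the corollary to the reader after noting that Theorems \ref{th:m22} and \ref{th:altn} ``can be used'' to prove it: your first argument (restricting a sharply $2$-transitive set to the point stabilizer $M_{22}$, where it becomes sharply $1$-transitive) is exactly the reduction the paper sets up before Theorem \ref{th:m22}, and your second ($M_{23}\le A_{23}$ with $23\equiv 3\pmod 4$) is the intended use of Theorem \ref{th:altn}. Both routes are sound, including your check that the case $x=y$ is covered by applying sharp $2$-transitivity to the pairs $(\alpha,x)$ and $(\alpha,y)$.
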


As the last application of our contradicting subset method, we deal with the
stabilizer of the sporadic group $Co_3$ in its doubly transitive action on $276$
points. As a corollary, we obtain a purely combinatorial proof for a
theorem by Grundh\"ofer and M\"uller saying that $Co_3$ has no sharply
$2$-transitive set of permutations. Notice that the original proof used the
Atlas of Brauer characters.

\begin{theorem}\label{T:McL}
Let $G$ be the group $McL\!:\!2$ in its primitive permutation action on $275$
points. Then, $G$ does not contain a sharply transitive set of permutations. 
\end{theorem}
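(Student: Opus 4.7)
The plan is to apply Lemma~\ref{lm:noshrtrset} with $p=2$, generalizing the strategy used for Theorem~\ref{th:m22}. The essential observation is that $G=McL{:}2$ contains a copy $M$ of $M_{22}$ whose orbits on the 275-point set $\Omega$ have sizes 22 and 253; these orbits can be $M$-equivariantly identified with the 22 non-distinguished points $\Omega_1$ and the 253 heptads $\Omega_2$ of $\mathcal W_{23}$, where the ``missing'' 23rd point of $\mathcal W_{23}$ is invisible in the $275$-point action.

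With this identification I would produce candidate subsets $B,C\subset\Omega$ of odd cardinality by lifting heptads $H_1,H_2$ of $\mathcal W_{23}$ to $\Omega$: each lift combines the six or seven points of $H_i\cap\Omega_1$ with the heptad-label $\{H_i\}\subset\Omega_2$, possibly together with symmetric-difference corrections designed to kill parity accidents when $H_1=H_2^g$. For $g\in M$, the computation of $|B\cap C^g|$ splits naturally into an $\Omega_1$-part, controlled by $|H_1\cap H_2^g|$, and an $\Omega_2$-part, which is the indicator $[H_1=H_2^g]$. Since $|H_1\cap H_2^g|\in\{1,3,7\}$ is always odd, exactly as in the proof of Theorem~\ref{th:m22} one can arrange the construction so that $|B\cap C^g|$ is even for every $g\in M$.

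The main obstacle is extending this parity claim from $g\in M$ to all $g\in G$, since $G$ does not preserve the decomposition $\Omega_1\sqcup\Omega_2$ and generic $G$-translates mix the two $M$-orbits and the heptad structure. I expect the resolution to be essentially representation-theoretic: one has to verify that $\mathbf 1_B,\mathbf 1_C$ lie in matching $\mathbb F_2G$-submodules of the permutation module $\mathbb F_2^{\Omega}$, namely $\mathbf 1_B\in V^\perp$, where $V$ is the $\mathbb F_2G$-submodule generated by the $G$-translates of $\mathbf 1_C$, and $\mathbf 1\notin V$. The existence of such a submodule $V$ reflects the 2-modular decomposition of the rank-$3$ permutation character $1+22+252$ of $G$, and one expects the heptad-based subsets to land in precisely the right 2-modular constituent so that the even-parity statement promotes from $M$ to $G$. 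Once this is verified, the hypotheses of Lemma~\ref{lm:noshrtrset} are met with $p=2$ and the theorem follows.
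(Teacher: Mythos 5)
Your proposal does not reach a proof: the decisive step is missing. Lemma~\ref{lm:noshrtrset} requires $p\mid\abs{B\cap C^g}$ for \emph{every} $g\in G$, and you only argue (and even then only in outline) the case $g\in M\cong M_{22}$. The promotion from $M$ to $G$ is precisely the hard part, and you leave it as something you ``expect'' to follow from an unspecified analysis of the $2$-modular structure of the permutation module; nothing in the proposal verifies that the translates of $\mathbf{1}_C$ generate a proper submodule orthogonal to $\mathbf{1}_B$, and indeed $B$ and $C$ are never pinned down (the ``symmetric-difference corrections'' are left undefined, so there is no concrete pair of sets whose intersection numbers one could even check). There is also a factual error at the outset: $M_{22}$ sits in $McL$ as the stabilizer of a point $q$ of $\mathcal W_{23}$, so its orbits on the $275$ points have sizes $22$, $77$ and $176$ (points $\ne q$, blocks through $q$, blocks missing $q$); it is not transitive on the $253$ blocks, so the two-orbit identification $\Omega=\Omega_1\sqcup\Omega_2$ with $\abs{\Omega_2}=253$ on which your construction rests is not available. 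Finally, it is not established that a mod~$2$ argument exists for this group at all.

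The paper avoids the subgroup-to-group issue entirely by drawing $B$ and $C$ from $G$-invariant families of subsets, and it works modulo $3$, not $2$. Identifying $G$ with $\Aut(\Gamma)$ for the McLaughlin strongly regular graph $\Gamma$ with parameters $(275,112,30,56)$, it takes $B$ to be the $22$ Witt-design points in the standard construction of $\Gamma$ from $\mathcal W_{23}$, and $C$ to be the common neighbourhood of two non-adjacent vertices, so $\abs{B}=22$ and $\abs{C}=56$ are prime to $3$. Since $C^g$ is again such a common neighbourhood, one may assume $g=1$, and a short case analysis using the block intersection sizes of $\mathcal W_{23}$ gives $\abs{B\cap C^g}\in\{0,3,6,12\}$. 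If you want to salvage your approach, you would at a minimum need to exhibit concrete sets and prove the divisibility for all of $G$; choosing $B$ and $C$ from families permuted by $G$ is the device that makes that verification finite and easy.
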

\begin{proof}
Identify $G$ with the automorphism group of the McLaughlin graph $\Gamma$,
acting on the $275$ vertices. We claim that there are subsets $B$ and $C$ of
vertices with $\abs{B}=22$, $\abs{C}=56$, and $\abs{B\cap C^g}\in\{0,3,6,12\}$
for all $g\in G$. The theorem then follows from Lemma \ref{lm:noshrtrset} with
$p=3$.

In order to describe $B$ and $C$, we use the construction of $\Gamma$ based on
the Witt design $\mathcal W_{23}$, see e.g.\ \cite[11.4.H]{BCN:graphs}. Let
$B\cup\{q\}$ be the $23$ points of $\mathcal W_{23}$. Let $U$ be the $77$
blocks of $\mathcal W_{23}$ which contain $q$, and $V$ be the $176$ blocks
which do not contain $q$. The vertices of $\Gamma$ are the $22+76+176=275$
elements from $B\cup U\cup V$. Adjacency $\sim$ on $\Gamma$ is defined as
follows: The elements in $B$ are pairwise non--adjacent. Furthermore, for
$b\in B$, $u,u'\in U$, $v,v'\in V$ define: $b\sim u$ if $b\not\in u$, $b\sim
v$ if $b\in v$, $u\sim u'$ if $\abs{u\cap u'}=1$ (so $u\cap u'=\{q\}$), $v\sim
v'$ if $\abs{v\cap v'}=1$, and $u\sim v$ if $\abs{u\cap v}=3$.

This construction gives the strongly regular graph $\Gamma$ with parameters
$(275,112,30,56)$. Pick two vertices $i\ne j$ which are not adjacent, and let
$C$ be the set of vertices which are adjacent to $i$ and $j$. Then
$\abs{C}=56$. For $g\in G=\text{Aut}(\Gamma)$, $C^g$ is again the common
neighborhood of two non--adjacent vertices. Thus without loss of generality we
may assume $g=1$, so we need to show that $\abs{B\cap C}=0,3,6$, or
$12$. Suppose that $\abs{B\cap C}>0$. Then there is a vertex $x\in B\cap C$
which is adjacent to $i$ and $j$. Therefore $i,j\not\in B$. Recall that two
distinct blocks of $\mathcal W_{23}$ intersect in either $1$ or $3$ points.

We have to consider three cases: First $i,j\in U$. Then $\abs{i\cap j}=3$ and
$q\in i\cap j$. Furthermore, $B\cap C=B\setminus(i\cup j)$, so $\abs{B\cap
  C}=12$. Next, if $i,j\in V$, then $\abs{i\cap j}=3$ and $B\cap C=i\cap j$,
so $\abs{B\cap C}=3$. Finally, if $i\in U$, $j\in V$, then $\abs{i\cap j}=1$
and $B\cap C=j\setminus i$, so $\abs{B\cap C}=6$ and we have covered all cases.
\end{proof}

\section{On $2$-transitive symmetric designs}

As another application of the lemma we reprove \cite[Theorem
  1.10]{GrundhoeferMueller} without using character theory. In particular,
Lorimer's and O'Nan's results \cite{O'N85} about the nonexistence of sharply
$2$--transitive sets of permutations in $\PgL_k(q)$ ($k\ge3$) hold by simple
counting arguments.

\begin{theorem} 
Let $G$ be an automorphism group of a nontrivial symmetric design. Then the
stabilizer in $G$ of a point does not contain a subset which is sharply
transitive on the remaining points. In particular, $G$ does not contain a subset
which is sharply $2$-transitive on the points of the design.
\end{theorem}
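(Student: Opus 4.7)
The plan is to apply Lemma~\ref{lm:doublecount} to two natural choices of $(B,C)$ and to combine the resulting integrality constraints with the standard identity $\lambda(v-1)=k(k-1)$ for a symmetric $2$-$(v,k,\lambda)$ design. Fix a point $p_0\in\Omega$ and suppose for contradiction that $S\subseteq G_{p_0}$ is sharply transitive on $\Omega':=\Omega\setminus\{p_0\}$, so $|S|=v-1$. Recall that two distinct blocks of a symmetric design meet in exactly $\lambda$ points, and nontriviality guarantees that blocks both through and not through $p_0$ exist.

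First I would take $B=C=B_1$ for some block $B_1$ with $p_0\notin B_1$, so $B_1\subseteq\Omega'$ and $|B_1|=k$. For any $g\in G_{p_0}$ the image $B_1^g$ is again a block missing $p_0$, so $|B_1\cap B_1^g|$ equals $k$ or $\lambda$ according to whether $B_1^g=B_1$. Writing $a$ for the number of $g\in S$ that fix $B_1$, Lemma~\ref{lm:doublecount} becomes $ak+(|S|-a)\lambda=k^2$, which via the identity simplifies to $a(k-\lambda)=k$; integrality of $a$ forces $(k-\lambda)\mid k$. Next I would take $B=C=B_2\setminus\{p_0\}$ for a block $B_2$ with $p_0\in B_2$, so $|B|=k-1$. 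For $g\in G_{p_0}$ both $B_2$ and $B_2^g$ contain $p_0$, and $|(B_2\setminus\{p_0\})\cap(B_2^g\setminus\{p_0\})|\in\{k-1,\lambda-1\}$. A parallel double-count, using the consequence $\lambda(v-k)=(k-1)(k-\lambda)$ of the standard identity, yields $b\lambda=k-1$ for the number $b$ of $g\in S$ that fix $B_2$, hence $\lambda\mid(k-1)$.

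Combining these, $(k-\lambda)\mid k$ is equivalent to $(k-\lambda)\mid\lambda$, which together with $\lambda\mid(k-1)$ gives $(k-\lambda)\mid(k-1)$; jointly with $(k-\lambda)\mid k$ this forces $(k-\lambda)\mid\gcd(k,k-1)=1$, so $k=\lambda+1$. Plugging back into the identity yields $v=k+1$, i.e.\ the trivial design in which every block is the complement of a single point, contradicting nontriviality. The ``in particular'' clause is immediate: a sharply $2$-transitive subset of $G$ on $\Omega$ intersected with $G_{p_0}$ is sharply transitive on $\Omega'$, which has just been ruled out. I expect the main obstacle to be the arithmetic in the second double-count, where invoking the derived form $\lambda(v-k)=(k-1)(k-\lambda)$ of the standard identity is what makes the divisibility $\lambda\mid(k-1)$ come out cleanly; the rest of the argument is formal.
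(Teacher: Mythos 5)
Your proof is correct and follows essentially the same route as the paper: the same two applications of Lemma~\ref{lm:doublecount} (a block off the fixed point, and a block through it with the point removed), combined with the identity $\lambda(v-1)=k(k-1)$ to force $k-\lambda=1$ and hence triviality. The only difference is cosmetic: you package the second count as $b\lambda=k-1$ and finish via $\gcd(k,k-1)=1$, whereas the paper keeps it as $b(k-\lambda)=v-k$ and concludes from $k-\lambda$ dividing both $v$ and $v-1$.
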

\begin{proof}
Let $v>k>\lambda$ be the usual parameters of the design. So the set $\Omega'$
of points of the design has size $v$, each block has size $k$, and two distinct
blocks intersect in $\lambda$ point. We will use the easy relation
$(v-1)\lambda=k^2-k$ (see any book on designs).

Fix $\omega\in\Omega'$, let $G_\omega$ be the stabilizer of $\omega$ in $G$,
and suppose that $S\subseteq G_\omega$ is sharply transitive on the set
$\Omega:=\Omega'\setminus\{\omega\}$ of size $v-1$. As each point is contained
in $k<v$ blocks, there is a block $B$ with $\omega\notin B$. Apply Lemma
\ref{lm:doublecount} with $C=B$, so
\[ \sum_{g\in S}\abs{B\cap B^g}=\abs{B}^2=k^2. \]
Let $a$ be the number of $g\in S$ with $B=B^g$. In the remaining
$\abs{S}-a=v-1-a$ cases we have $B\ne B^g$, hence $\abs{B\cap B^g}=\lambda$.

We obtain $ak+(v-1-a)\lambda=k^2$. Recall that $(v-1)\lambda=k^2-k$, so
\[ a(k-\lambda)=k. \]
Now let $B'$ be a block with $\omega\in B'$. Set $B=C=
B'\setminus\{\omega\}$. Then $\abs{B\cap B^g}=k-1$ or $\lambda-1$. Let $b$ be
the frequency of the first case. As above we get
$b(k-1)+(v-1-b)(\lambda-1)=(k-1)^2$, which simplifies to
\[ b(k-\lambda)=v-k. \]
We obtain:
\[ (k-\lambda)^2 \text{ divides } k(v-k), \text{ and } k-\lambda \text{ divides
} k+(v-k)=v. \] On the other hand, the basic relation $(v-1)\lambda=k^2-k$ is
equivalent to $k(v-k)=(v-1)(k-\lambda)$, so $k-\lambda$ divides
$v-1$. Therefore $k-\lambda=1$, hence $k=v-1$ and we have the trivial design,
contrary to our assumption.
\end{proof}

\section{Remarks on $M_{24}$}

In the last section, we sketch a computer based proof showing that
\eqref{eq:01} has an integer solution for $G=M_{24}$ in its permutation
representation on $\Omega^{(2)}$ with $\Omega=\{1,\ldots,24\}$. As the tedious
proofs of Lemmas \ref{L:down}, \ref{L:up} and Theorem \ref{th:intsol} are not
directly related to the main goal of this paper, we omit them and will give
them in a separate paper.

For a subgroup $H\le G$, we consider the following system \eqref{H} of linear 
equations:
\begin{quote} 
Let $\Omega_1,\Omega_2,\dots,\Omega_r$ be the orbits of $H$ on
$\Omega\times\Omega$, and $T$ be a set of representatives for the action of
$H$ on $G$ by conjugation. For $i=1,2,\dots,r$ and $g\in G$ set
\[ 
a_i(g)=\abs{\{(\omega_1,\omega_2)\in\Omega_i|\omega_1^g=\omega_2\}} 
\] 
and consider the system of $r$ linear equations in the variables $x_g$, 
$g\in T$: 
\begin{equation}\label{H}
\sum_{g\in T}x_ga_i(g)=\abs{\Omega_i},\;\;i=1,2,\dots,r. 
\end{equation} 
\end{quote} 
The system \eqref{eq:01} is the same as the system \eqref{H} with 
$H=1$. Furthermore, note that $a_i(g)$ depends only on the $H$--class of $g$, 
so the system of equations does not depend on the chosen system $T$ of 
representatives. 
 
\begin{lemma}\label{L:down} 
Let $U\le V\le G$ be subgroups of $G$. If \eqref{H} has an integral solution 
for $H=U$, then \eqref{H} has an integral solution for $H=V$. 
\end{lemma}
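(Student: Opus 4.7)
The plan is to construct an integral solution of \eqref{H} for $H=V$ directly from one for $H=U$, by summing up the $U$-class contributions that fuse into the same $V$-class. Since $U\le V$, every $V$-orbit on $\Omega\times\Omega$ decomposes as a disjoint union of $U$-orbits, say $\Omega_j^V=\bigsqcup_{i\in I_j}\Omega_i^U$ for $j=1,\dots,s$; likewise every $V$-conjugacy class in $G$ is a disjoint union of $U$-conjugacy classes. Let $T_U$ and $T_V$ be the chosen systems of representatives for the conjugation actions of $U$ and $V$ on $G$, and write $h^V$ for the $V$-class of $h\in G$.

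Given an integral solution $(x_g^U)_{g\in T_U}$ of \eqref{H} for $H=U$, I would define
\[
x_h^V\;=\;\sum_{g\in T_U\cap h^V}x_g^U\qquad(h\in T_V).
\]
As a sum of integers this is automatically integral, and $T_U\cap h^V$ picks out precisely one representative for each $U$-class contained in the $V$-class $h^V$.

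To verify the $V$-equations, write $a_i^U(g)$ and $a_j^V(g)$ for the pair counts attached to $\Omega_i^U$ and $\Omega_j^V$. Straight from the definition as cardinalities one has
\[
a_j^V(g)\;=\;\sum_{i\in I_j}a_i^U(g)\qquad\text{for every }g\in G.
\]
Summing the $U$-equations of \eqref{H} over $i\in I_j$ therefore yields
\[
\sum_{g\in T_U}x_g^U\,a_j^V(g)\;=\;\sum_{i\in I_j}\abs{\Omega_i^U}\;=\;\abs{\Omega_j^V}.
\]
Since $a_j^V$ is constant on $V$-classes (via the bijection $(\omega_1,\omega_2)\mapsto(\omega_1^v,\omega_2^v)$ on $\Omega_j^V$), it is in particular constant on each $U$-class contained in a $V$-class. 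Regrouping the left-hand sum according to which $V$-class each $g\in T_U$ lies in then rewrites it as $\sum_{h\in T_V}x_h^V\,a_j^V(h)$, and we obtain the $j$-th equation of \eqref{H} for $H=V$.

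The proof is essentially bookkeeping, so I do not foresee a genuine obstacle. The only points that deserve a moment's attention are the additivity $a_j^V=\sum_{i\in I_j}a_i^U$ and the conjugation-invariance of $a_j^V$ on $V$-classes, both of which follow at once from the definitions and make the regrouping step legitimate.
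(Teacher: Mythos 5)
Your proof is correct: the additivity $a_j^V(g)=\sum_{i\in I_j}a_i^U(g)$ coming from the decomposition of each $V$-orbit into $U$-orbits, the constancy of $a_j^V$ on $V$-conjugacy classes, and the regrouping of $T_U$ according to $V$-classes (each $V$-class meeting $T_U$ in exactly one representative per $U$-class it contains) are precisely the points that need checking, and you check them. The paper itself omits the proof of this lemma, deferring it to a separate paper, so there is nothing to compare against; your argument is the natural fusion/bookkeeping argument and I see no gap in it.
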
 
\begin{lemma}\label{L:up} 
Let $p^m>1$ be a power of a prime $p$, and $R=\ZZZ/p^m\ZZZ$. Suppose that 
\eqref{H} has a solution in $R$ for some $p'$--subgroup $H$ of $G$. Then also 
\eqref{eq:01} is solvable over $R$. 
\end{lemma}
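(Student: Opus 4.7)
The plan is to obtain a solution of the full system \eqref{eq:01} over $R$ by ``spreading out'' the assumed solution of \eqref{H} uniformly across each $H$-conjugacy class in $G$. Concretely, suppose $(x_g)_{g\in T}$ solves \eqref{H}. For an arbitrary $g\in G$, let $[g]$ denote its $H$-class and $g_0\in T$ its representative; I would define
\[
y_g=\frac{x_{g_0}}{\abs{[g]}}\in R.
\]
The very first thing to check is that this is well-defined in $R$: since $\abs{[g]}$ divides $\abs{H}$ and $H$ is a $p'$-group by hypothesis, $\abs{[g]}$ is coprime to $p$, hence invertible in $R=\ZZZ/p^m\ZZZ$. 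This is exactly where the $p'$ assumption on $H$ enters, and I expect it to be the only place.

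Next, I would verify that $(y_g)_{g\in G}$ satisfies \eqref{eq:01}, that is, for every pair $(\omega_1,\omega_2)\in\Omega\times\Omega$ the sum of $y_g$ over those $g$ with $\omega_1^g=\omega_2$ is $1$ in $R$. Grouping the summation by $H$-classes, this reduces to computing, for each representative $g_0\in T$, the quantity
\[
N(g_0):=\abs{\{h\in H\mid \omega_1^{hg_0h^{-1}}=\omega_2\}}.
\]
Rewriting $\omega_1^{hg_0h^{-1}}=\omega_2$ as $(\omega_1^h)^{g_0}=\omega_2^h$, I would observe that as $h$ runs over $H$ the pair $(\omega_1^h,\omega_2^h)$ sweeps out the $H$-orbit $\Omega_i$ containing $(\omega_1,\omega_2)$, each pair being hit $\abs{H_{(\omega_1,\omega_2)}}$ times. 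Hence $N(g_0)=\abs{H_{(\omega_1,\omega_2)}}\cdot a_i(g_0)$. A parallel accounting shows that the number of elements of $[g_0]$ sending $\omega_1$ to $\omega_2$ equals $N(g_0)/\abs{C_H(g_0)}$, while $\abs{[g_0]}=\abs{H}/\abs{C_H(g_0)}$. The factor $\abs{C_H(g_0)}$ cancels and, after using $\abs{\Omega_i}=\abs{H}/\abs{H_{(\omega_1,\omega_2)}}$, the total collapses to
\[
\sum_{g_0\in T}y_{g_0}^{\text{contribution}} = \frac{1}{\abs{\Omega_i}}\sum_{g_0\in T}x_{g_0}a_i(g_0),
\]
which equals $1$ by the $i$-th equation of \eqref{H}.

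The main obstacle is purely bookkeeping: keeping the orbit-stabilizer factors straight for both the $H$-action on $G$ by conjugation and the $H$-action on $\Omega\times\Omega$, and checking that they cancel in $R$ rather than only in $\QQQ$. The $p'$-hypothesis on $H$ is used exactly to guarantee that every such orbit size appearing in the denominators is a unit of $R$; once that is in place the computation above is a direct double count. No appeal to $\abs{G}$ being coprime to $p$ is needed, which is crucial since we only have control over $\abs{H}$.
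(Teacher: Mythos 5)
Your argument is correct: the averaging $y_g=x_{g_0}/\abs{[g]}$ over $H$-conjugacy classes is well defined because every class length divides $\abs{H}$, which is a unit in $R$, and your double count via orbit--stabilizer for both $H$-actions does collapse each row sum of \eqref{eq:01} to $\frac{1}{\abs{\Omega_i}}\sum_{g_0\in T}x_{g_0}a_i(g_0)=1$. The paper itself omits the proof of this lemma (deferring it to a separate paper), so there is nothing to compare line by line, but your proof matches the one clue the authors give --- that the argument uses only that $\abs{H}$ is a unit in $R$ --- and is almost certainly the intended one.
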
 
The proof of Lemma \ref{L:up} only uses that $\abs{H}$ is a unit in $R$. So if
\eqref{H} has a rational solution for some $H\le G$, then \eqref{eq:01} has a
rational solution too. So the rational solubility of \eqref{eq:01} can be
decided by the rational solubility of \eqref{H} for $H=G$, which gives a very
weak condition.

A useful criterion to decide whether \eqref{eq:01} has an integral solution is
\begin{theorem} \label{th:intsol}
The following are equivalent:\begin{itemize} 
\item[(i)] The system \eqref{eq:01} has an integral solution.
\item[(ii)] For each prime divisor $p$ of $\abs{G}$, the system \eqref{H} has 
  an integral solution for some $p'$--subgroup $H$ of $G$. 
\end{itemize} 
\end{theorem}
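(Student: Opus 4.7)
The plan is to reduce the theorem to the following local-global principle for integer linear systems, a routine consequence of the Smith normal form: a system $Ax=b$ with integer data has a $\ZZZ$-solution if and only if it has a solution over $\ZZZ/p^m\ZZZ$ for every prime $p$ and every $m\ge 1$. Granted this, the direction (i) $\Rightarrow$ (ii) is immediate by taking $H=1$: the $H$-orbits on $\Omega\times\Omega$ are then singletons and \eqref{H} reduces literally to \eqref{eq:01}, while $\{1\}$ is a $p'$-subgroup for every $p$.

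For the substantive direction (ii) $\Rightarrow$ (i) I would verify mod $p^m$ solvability of \eqref{eq:01} at every prime separately. When $p\mid\abs{G}$, hypothesis (ii) supplies a $p'$-subgroup $H_p\le G$ together with an integer solution of \eqref{H}; reducing that solution mod $p^m$ gives a solution of \eqref{H} over $R=\ZZZ/p^m\ZZZ$, which by Lemma \ref{L:up} upgrades to an $R$-solution of \eqref{eq:01}. When $p\nmid\abs{G}$, hypothesis (ii) says nothing, but one may note that $G$ must be transitive on $\Omega$ (otherwise an $H$-orbit $\Omega_i$ consisting of pairs in distinct $G$-orbits would force $a_i(g)=0$ for all $g$ while $\abs{\Omega_i}>0$, so no \eqref{H} could be solved and both (i) and (ii) would be vacuously false). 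Transitivity yields $\sum_{g\in G}\pi(g)=(\abs{G}/\abs{\Omega})J$, so $x_g=\abs{\Omega}/\abs{G}$ is a rational solution of \eqref{eq:01}; since $\abs{G}$ is a unit in $R$ when $p\nmid\abs{G}$, the reduction of this rational solution is a valid $R$-solution.

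The main obstacle is the careful statement and justification of the Smith-form local-global step: one needs mod $p^m$ solvability for all $m$ (not just $m=1$) in order to control the $p$-primary torsion in $\mathrm{coker}(A)$, and one has to treat primes not dividing $\abs{G}$ separately, since the hypothesis is silent there. Once this framework is in place, Lemma \ref{L:up} carries the substantive content at the primes dividing $\abs{G}$, the averaging observation handles the remaining primes, and assembling the pieces is bookkeeping.
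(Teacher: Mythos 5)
Your argument is correct, but note that the paper itself contains no proof of Theorem \ref{th:intsol} to compare it against: the authors explicitly state that the proofs of Lemmas \ref{L:down}, \ref{L:up} and Theorem \ref{th:intsol} are omitted and deferred to a separate paper. Your proposal is evidently the intended argument, and it is complete. The direction (i) $\Rightarrow$ (ii) via $H=1$ is exactly right, since the paper identifies \eqref{H} with $H=1$ as \eqref{eq:01} and the trivial group is a $p'$-subgroup for every $p$. For (ii) $\Rightarrow$ (i), the Smith-normal-form local--global principle is the correct framework, and you have spotted the two points where care is genuinely needed: one must have solvability modulo $p^m$ for \emph{all} $m$ (not merely mod $p$), which is why hypothesis (ii) demands an \emph{integral} solution of \eqref{H} that can be reduced modulo every $p^m$ before invoking Lemma \ref{L:up}; and the primes $p\nmid\abs{G}$ are not covered by (ii) at all, so they must be handled separately, which your averaging solution $x_g=\abs{\Omega}/\abs{G}$ (valid once $G$ is seen to be transitive, and with denominator a unit in $\ZZZ/p^m\ZZZ$) does correctly. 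This second point is consistent with the paper's own remark that the proof of Lemma \ref{L:up} only uses that $\abs{H}$ is a unit in $R$, which yields rational solvability of \eqref{eq:01} and hence solvability at the primes outside $\abs{G}$. The only dependency you inherit is Lemma \ref{L:up} itself, which the paper also states without proof; using it as a black box is legitimate in context, but a self-contained write-up would need to supply the averaging-over-$H$ argument behind it.
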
 
In order to apply this theorem to the action of $G=M_{24}$ on $\Omega^{(2)}$,
we first choose a Sylow $2$--subgroup $H$ of $G$. So $H$ is a $p'$--subgroup
of $G$ for each odd prime $p$. The number of $H$--orbits on $G$ is
$241871$. So the number of unknowns is reduced by a factor
$\abs{G}/241871=1012.2\ldots$. The number of equations is $603$. In order to
solve this system, one can pick about $270$ variables at random, and set the
remaining ones to $0$. Experiments with the computer algebra system Magma
\cite{Magma} show that this system usually has an integral solution.

It remains to take a $2'$--subgroup of $G$. For this we let $H$ be the
normalizer of a Sylow $23$--subgroup. Then $\abs{H}=253$. This reduces the
number of unknowns from $\abs{G}=244823040$ by a factor of about $253$ to
$967692$. Here, picking $520$ unknowns at random usually gives an integral
solution.

In both cases, the running time is a few minutes.

There are several modifications of this method. In \eqref{eq:01} it suffices
to consider the sum over the fixed-point-free elements and $1$, and likewise
in \eqref{H} (and the lemmas and the theorem), it suffices to consider $1$
together with the $H$--orbits on fixed-poin-free elements. However, even under
this assumption, \eqref{eq:01} still has an integral solution. To do so, one
simply sets $x_1=1$ and randomly picks the variables $x_g$ for
fixed-point-free elements $g$ from $T$.

Also, Theorem \ref{th:intsol} and Lemma \ref{L:down} remain true if we replace
`integral' by `non-negative integral'. So we are faced with an integer linear
programming problem. Experiments have shown that \eqref{H} has a
non-negative integral solution for each of the $29$ subgroups $H$ of $G=M_{24}$
with $[G:H]\le26565$.
\bibliographystyle{plain}

\end{document}